\newtheorem{theorem}{Theorem}[section]
\newtheorem{proposition}[theorem]{Proposition}
\newtheorem{lemma}[theorem]{Lemma} 
\theoremstyle{remark}
\numberwithin{equation}{section}
\newcommand{\N}{\hbox{$ I\kern -0.23em N$}}
\newcommand{\R}{{\mathbb R}}
\begin{document}
\title
{Global diffeomorphism of the Lagrangian flow-map defining Equatorially trapped water waves}

\author{Silvia Sastre-G\'{o}mez  %
  \thanks{School of Mathematical Sciences, University College Cork, Ireland. Supported by the Science Foundation Ireland (SFI) under the grant 13/CDA/2117. email: silvia.sastregomez@ucc.ie.}}

\date{}

\maketitle

\begin{abstract}
The aim of this paper is to prove that a three dimensional  
Lagrangian flow which defines equatorially trapped water waves is 
dynamically possible. This is achieved by applying a mixture of analytical 
and topological methods to prove that the nonlinear exact solution to the 
geophysical governing equations,   
derived by Constantin in \cite{Con2012}, is a global diffeomorphism from 
the Lagrangian labelling variables to the fluid domain beneath the free surface. 
\end{abstract}

\section{Introduction}
In this paper we apply a mixture of analytical and topological methods to 
establish  that a recently derived solution   defining Equatorially trapped 
waves is dynamically possible. This remarkable solution, derived by 
Constantin in \cite{Con2012} and given below by equation \eqref{exact_sol}, 
is an exact solution of the nonlinear $\beta-$plane governing equations for 
Equatorial water waves, and it is explicit in the Lagrangian framework. The 
main result of this paper establishes that the three-dimensional mapping 
\eqref{exact_sol} from the Lagrangian labelling domain to the fluid domain 
defines a global diffeomorphism--- a consequence of which is that the 
solution \eqref{exact_sol} defines a fluid motion which is dynamically 
possible.  We achieve this result by first establishing that \eqref{exact_sol} is 
locally diffeomorphic and injective, and then we render our results global by 
applying a suitable version of the classical degree-theoretic {\em Invariance 
of Domain} Theorem, cf. \cite{Deimling,Wieslaw}.

The solution presented by Constantin in \cite{Con2012} represents a 
geophysical generalization of the celebrated Gerstner's wave, in the sense 
that ignoring Coriolis terms in \eqref{exact_sol} recovers Gerstner's wave 
solution. The primary importance of Gerstner's wave is probably the fact that 
it represents the only known explicit and exact solution of the nonlinear 
periodic gravity wave problem with a non-flat free-surface. Gerstner's wave 
is a two-dimensional wave propagating over a fluid domain of infinite depth 
(cf. \cite{Constantin_2001,Constantin_book,Henry_2008}), and interestingly 
it may be modified to describe edge-waves propagating over a sloping bed 
\cite{Constantin_edge_2001,Stuhl}. 
The geophysical solution presented in \cite{Con2012} encompasses 
Gerstner's solution, yet it also possesses a number of inherent 
characteristics which transcends Gerstner's wave. The solution \eqref
{exact_sol} is a truly three-dimensional eastward-propagating geophysical 
wave, and furthermore it is Equatorially-trapped--- achieving its greatest 
amplitude at the Equator and exhibiting a strong exponential decay in 
meridional directions away from the Equator. The solution is furthermore 
nonlinear, as is seen from the wave-surface profile, and has a dispersion 
relation that is dependant on the Coriolis parameter. 

Since the solution \eqref{exact_sol} is explicit in the Lagrangian formulation, 
we may immediately discern some qualitative properties of the physical fluid 
motion. Indeed, an advantage of solutions in the Lagrangian framework is 
that the fluid kinematics may be explicitly described \cite{Bennett}. From 
\eqref{exact_sol} we see that at each fixed latitude the solution  prescribes 
individual fluid particles to move clockwise in a vertical plane. Each particle 
moves in a circle, with the diameter of the circles decreasing exponentially 
with depth. 
In \cite{Con2012} it was simply shown that the solution \eqref{exact_sol} is 
compatible with the  governing equations of the $\beta-$plane approximation 
for Equatorial water waves \eqref{incompress_eq}-\eqref{asymp_beh}. 

The aim of this paper is to rigorously justify that the fluid motion defined by 
\eqref{exact_sol} is dynamically possible.  This is achieved by establishing 
that the solution \eqref{exact_sol} defines a global diffeomorphism, thereby 
ensuring that it is indeed possible to have a three-dimensional motion of the 
whole fluid body where all the particles describe circles with a depth-
dependant radius at fixed latitudes, and furthermore the particles never 
collide but instead they fill out the entire infinite region below the surface 
wave. In so doing we show that the fluid domain as a whole evolves in a 
manner which is consistent with the full governing equations.  We note that 
subsequent to the derivation of Constantin's solution, a wide range of 
geophysical generalizations and variations to \cite{Con2012} have been 
produced and analysed, for example 
\cite{ConJPO, R1, ConstGer, GenHen2014, Hen2013, HenHsu2015a,  
HenHsu2015b, I-K, Mat2012, Mat2013}. It is expected that the rigorous 
considerations of this paper are also applicable to these variants.

\section{The Equatorially trapped wave solution}
\subsection{Governing equations}
We consider geophysical waves in the Equatorial region, where we 
assume that 
the earth is a perfect sphere of radius $R=6378$ km. We are in a rotating 
framework, where the $x$-axis is  facing horizontal due east (zonal 
direction), the $y$-axis is due 
north (meridional direction), and the $z$-axis is pointing vertically upwards. 
The governing equations for geophysical ocean waves are given by  Euler's 
equation with additional terms involving the Coriolis parameter which is 
proportional to the rotation 
speed of the earth, see \cite{Cushman_Roisin, Pedlosky} 
\begin{equation}\label{Euler_eq}
	\left\{
	\begin{array}{ll}
		u_t+uu_x+vu_y+wu_z\,+2\Omega w \cos \Phi
		-2\Omega v\sin\Phi 
		& \displaystyle= -{1\over \rho} P_x\smallskip\\
		v_t\,+uv_x\,+vv_y\,+wv_z\,+2\Omega u \sin\Phi 
		& \displaystyle = -{1\over \rho} P_y\smallskip\\
		w_t+uw_x+vw_y+ww_z-2\Omega u \cos\Phi 
		& \displaystyle = -{1\over \rho} P_z-g,
	\end{array}
	\right.
\end{equation}
the mass conservation equation
\begin{equation}\label{mass_con_eq}
	\rho_t+u\rho_x+v\rho_y+w\rho_z=0	
\end{equation}
and the equation of incompressibility
\begin{equation}\label{incompress_eq}
	u_x+v_y+w_z=0. 
\end{equation}
Here $\Phi$ represents the latitude, $(u,v,w)$ is the fluid velocity,   
$\Omega=73.10^{-6}$~rad$/$s is the (constant) rotational speed of earth 
(which is the sum of the rotation of the earth about its axis and the rotation 
around the sun, see \cite{Cushman_Roisin}), $g=9.8$~m/s$^{-2}$ is the 
gravitational constant, $\rho$ is the water density, and $P$ is the pressure. 

We are interested in Equatorial waves, that is, geophysical ocean waves 
in a region 
which is within $5^o$ latitude of the Equator. Since the latitude is small, 
we may use the approximations $\sin \Phi\approx \Phi$, and $\cos\Phi
\approx 1$, and thus linearising the Coriolis force leads to the $\beta$-plane 
approximation to equations \eqref{Euler_eq} given by
\begin{equation}\label{beta_plane_eq}
	\left\{
	\begin{array}{ll}
		u_t+uu_x+vu_y+wu_z+2\Omega w
		-\beta y v 
		& \displaystyle= -{1\over \rho} P_x\smallskip\\
		v_t+\,uv_x\,+vv_y+\,wv_z+\beta y u
		& \displaystyle = -{1\over \rho} P_y\smallskip\\
		w_t+uw_x+vw_y+ww_z-2\Omega u 
		& \displaystyle = -{1\over \rho} P_z-g,
	\end{array}
	\right.
\end{equation}
where $\beta=2\Omega/R=2.28\cdot 10^{-11}$~m$^{-1}$s$^{-1}$. 
The relevant boundary conditions are the kinematic boundary conditions 
\begin{equation}\label{kinematic_boundary_cond_1}
	w=\eta_t+u\eta_x+v\eta_y \mbox{ on } z=\eta(x,y,t),
\end{equation}
\begin{equation}\label{kinematic_boundary_pressure}
	P=P_{atm} \mbox{ on } z=\eta(x,y,t),
\end{equation}
where $P_{atm}$ is the (constant) atmospheric pressure, and $\eta(x,y,t)$ 
is the free surface. The boundary condition \eqref
{kinematic_boundary_cond_1} states that all the particles in the surface 
will stay in the surface for all time $t$, and the boundary condition \eqref
{kinematic_boundary_pressure} decouples the water flow from the motion 
of the air above. We work with an infinitely-deep fluid domain and so we 
require  
the velocity field to converge rapidly to zero with depth, that is
\begin{equation}\label{asymp_beh}
(u,w)\to (0,0)\;\mbox{ as }\; z\to-\infty. 
\end{equation}
The governing equations for the $\beta-$plane approximation of geophysical 
ocean waves are given by 
\eqref{incompress_eq}-\eqref{asymp_beh}.

\subsection{Exact solution}
In this section we present and describe briefly the exact solution of the 
$\beta$-plane governing 
equations \eqref{incompress_eq}-\eqref{asymp_beh} which was recently 
derived by Constantin \cite{Con2012}. This solution describes a three-
dimensional eastward-propagating geophysical wave which is Equatorially 
trapped, exhibiting a strong exponential decay in meridional directions away 
from the Equator, and which is periodic in the zonal direction. 
Equatorially trapped waves  propagating eastward and symmetric about the 
Equator are known to exist,  and they are regarded as one of the key factors 
in a possible explanation of  the El Ni\~no phenomenon 
(cf. \cite{Cushman_Roisin, Fedorov_Brown}). 
The formulation of the solution employs a Lagrangian viewpoint, describing 
the evolution in time of an individual fluid particle \cite{Bennett}. 
The Lagrangian positions of the fluid $(x,y,z)$ are given in terms of the 
labelling 
variables $(q,r,s)$, and time $t$ by 
\begin{equation}\label{exact_sol}
	\left\{
	\begin{array}{rl}
		x&\displaystyle=q-{1\over k} e^{k\left[r-f(s)\right]}\sin \left[k(q-ct)
		\right],\\
		y&\displaystyle=s,\\
		z&\displaystyle=r+{1\over k}e^{k\left[r-f(s)\right]}\cos \left[k(q-ct)
		\right],
	\end{array}
	\right.
\end{equation}
where $k$ is the wave number, defined by $k=2\pi/L$ where $L$ is the 
wavelength, and the wave phase speed is determined by the dispersion 
relation
%
\begin{equation}\label{disp_eq}
	c={\sqrt{\Omega^2+kg}-\Omega\over k},
\end{equation}
and also
\begin{equation}\label{f_eq}
	f(s)={c\beta\over 2g}s^2
\end{equation}
determines the decay of fluid particle oscillations in the meridional direction. 
The labelling 
variables take the values $(q,r,s)\in \R\times (-\infty,r_0)\times\R$, where 
 $r_0\leq 0$ is fixed.  
For every fixed $s$, the system \eqref{exact_sol} describes the flow 
beneath a surface wave propagating eastwards (in the $x$-direction) at constant 
speed $c$ determined by \eqref{disp_eq}.  
At fixed latitudes (that is, for $s$ fixed) the free surface $z=\eta(x,y,t)$ is obtained by setting $r=r_0(s)$ in the third equation in \eqref{exact_sol}, where $r_0(s)<r_0$ is the unique solution to
\[
	{e^{2k\left[r_0(s)-f(s)\right]}\over 2k}-r_0(s)
	={e^{2kr_0}\over 2k}-r_0.
\]
A plot of the free-surface for the wave solution \eqref{exact_sol} is given in Figure \ref{Gerstner} below.
\begin{figure} [htp]
\begin {center}
\includegraphics [height=7.5cm]{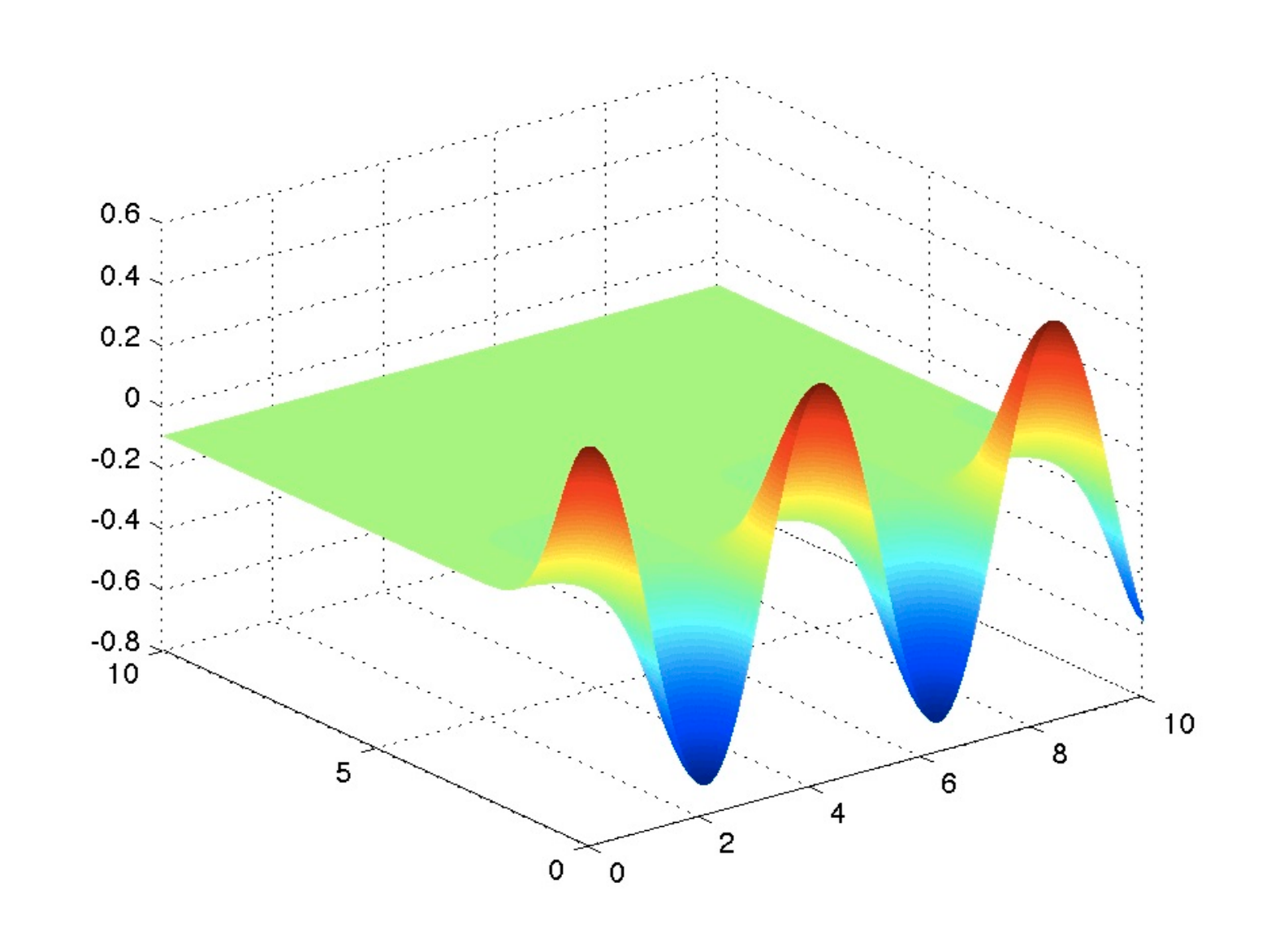}
\caption{Exact solution  \eqref{exact_sol}.}\label{Gerstner}
\end {center}
\end{figure}

In \cite{Con2012} the author focuses on proving, by explicit computation, that 
the exact solution 
\eqref{exact_sol} is compatible with the governing equations  
\eqref{incompress_eq}-\eqref{asymp_beh}. Our aim in this work is to prove that it is 
dynamically possible to have a global motion of the fluid domain where, at fixed latitudes, the particles move in circular paths 
with depth-dependant radius. Indeed, we prove in our main result 
Proposition~\ref{prop} that the fluid motion defined by \eqref{exact_sol} 
is dynamically possible, that is, at any instant $t$, the label map 
 is a global diffeomorphism from the labelling variables,  $\{(q,r,s): q\in\R,\, r
 \le r_0\mbox{ and } s\in\R\}$,  to the fluid domain beneath the free surface 
 given by 
%
\begin{equation}\label{surface_profile}
	(q,s)\mapsto \left(q-{1\over k}e^{r_0(s)-f(s)}\sin\left[k(q-ct)\right], s, r_0(s)+{1\over 
	k}e^{r_0(s)-f(s)}\cos\left[k(q-ct)\right]\right).
\end{equation}
For a fixed latitude $s$, the surface wave profile \eqref{surface_profile} is 
a reverse trochoid if 
$r_0(s)<0$ and a reverse cycloid with a cusp at the wave crest if $r_0(s)=0$ and 
$s=0$.  

%
Fixed $s$, and given $k>0$ and $r_0(s)\le 0$,  the curve $z=h_s(x)$ given parametrically by 
\begin{equation}\label{trochoid}
	\xi\mapsto \left({\xi\over k}-{e^{r_0(s)-f(s)}\over k}\sin\left(\xi\right), {1\over 
	k}-{e^{r_0(s)-f(s)}\over k}\cos\left(\xi\right)\right)
\end{equation}
is a trochoid if $r_0(s)-f(s)<0$ and a cycloid if $r_0(s)-f(s)=0$. It represents the 
curve traced by a fixed point at a distance ${e^{r_0(s)-f(s)}\over k}<{1\over k}$ 
from the center of a circle of radius ${1\over k}$ rolling along a straight line 
without slipping, (see Figure \ref{cycloid_trochoid}).

\begin{figure} [htp]
\begin {center}
\includegraphics [height=6cm]{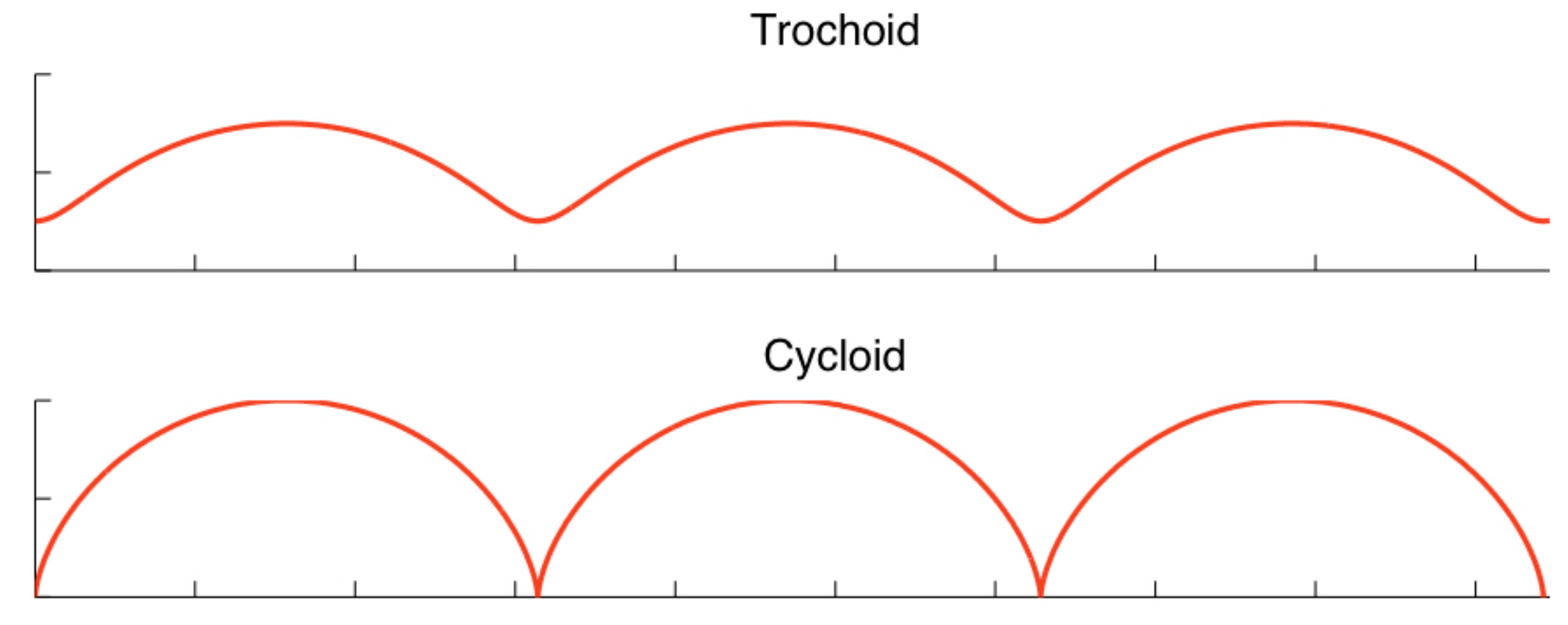}
\caption{Trochoid and cycloid curves.}\label
{cycloid_trochoid}
\end {center}
\end{figure}
Therefore, for a fixed latitude $s$, the free surface of the fluid has the 
equation $z=r_0(s)+{1\over k}-h_s
(x-ct)$ which represents a reverse trochoid propagating to the right with 
velocity $c$. Since $h_s$ is periodic with minimal period ${2\pi/k}$ then the  
surface is a periodic wave with period ${2\pi/k}$, which concurs with the 
definition of the wave number $k$.

\section{Main results}
To prove that the motion \eqref{exact_sol} is dynamically possible, it is 
sufficient to analyse \eqref{exact_sol} for the time $t=0$, when it takes the 
form
\begin{equation}\label{exact_sol_t_0}
	\left\{
	\begin{array}{rl}
		x&\displaystyle=q-{e^{k\left[r-f(s)\right]}\over k} \sin \left(kq\right),\\
		y&=s\\
		z&\displaystyle=r+{e^{k\left[r-f(s)\right]}\over k}\cos \left(kq\right).
	\end{array}
	\right.
\end{equation}
 The case of a general time $t$ in  \eqref{exact_sol} is recovered making 
 first the change of variables 
 $(q,r,s)\mapsto (q+ct,r,s)$, performing \eqref{exact_sol_t_0}, and finally 
 shifting 
 the horizontal variable $x$ by $ct$. 
 Therefore we can focus on \eqref{exact_sol_t_0}, and we further note that 
 as $q$ varies by $2\pi/k$, the $z$ 
 value reoccurs and $x$ is shifted linearly by $2\pi/k$. Hence, it suffices 
 to analyse  \eqref{exact_sol_t_0} on the domain
 $$\mathcal{D}=\left\{(q,r,s): q\in\left[0,
 {2\pi\over k}\right], r \le r_0 \mbox{ and } s\in\R\right\}.$$
In the following result we first prove that the map \eqref{exact_sol_t_0} is an injective local 
diffeomorphism.
\begin{lemma}\label{inject}
		For every fixed $t\ge 0$, if $r_0<0$ then the map \eqref{exact_sol_t_0} is a 
		local diffeomorphism from $\mathcal{D}=\left\{(q,r,s): q\in\left[0,
		{2\pi\over k}\right], r \le r_0 \mbox{ and } s\in\R\right\}$ into its image injectively. In the limiting case $r_0=0$, then this result holds for the domain $\mathcal D$ excluding the cusps at the equatorial wave-crests.
\end{lemma}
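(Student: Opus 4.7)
The plan is to reduce the three-dimensional problem to a one-parameter family of planar maps by exploiting that the middle coordinate in~\eqref{exact_sol_t_0} is $y=s$, then obtain local injectivity via the Inverse Function Theorem and global injectivity in each plane by a direct algebraic argument. For each fixed $s\in\R$, write $\Psi_s(q,r)=(x(q,r,s),z(q,r,s))$; both the local diffeomorphism and the injectivity of the full map $F$ reduce to the corresponding properties of each $\Psi_s$.

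For the local diffeomorphism property, I would expand the $3\times 3$ Jacobian of $F$ along its middle row $(0,0,1)$, collapsing it to the $2\times 2$ Jacobian of $\Psi_s$. A direct computation using $\sin^2+\cos^2=1$ yields
\[
\det DF(q,r,s)=e^{2k[r-f(s)]}-1.
\]
Since $f(s)=\tfrac{c\beta}{2g}s^2\ge 0$ by~\eqref{f_eq} and $r\le r_0\le 0$, this determinant is non-positive on $\mathcal{D}$ and vanishes only where $r-f(s)=0$; in $\mathcal D$ this can occur only in the limiting case $r_0=0$, along the equatorial wave-crest curve $\{s=0,\,r=0\}$. The Inverse Function Theorem then supplies the local diffeomorphism conclusion on $\mathcal{D}$ (resp.\ on $\mathcal{D}$ minus that curve).

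For injectivity, suppose $F(q_1,r_1,s_1)=F(q_2,r_2,s_2)$. The $y$-equation forces $s_1=s_2=s$. Setting $A_i:=e^{k[r_i-f(s)]}$ and isolating the differences $(q_1-q_2)$ and $(r_1-r_2)$ from the first and third components of~\eqref{exact_sol_t_0}, then squaring and adding and invoking the trigonometric identity $\cos\alpha\cos\beta+\sin\alpha\sin\beta=\cos(\alpha-\beta)$, I obtain the key identity
\[
k^2\bigl[(q_1-q_2)^2+(r_1-r_2)^2\bigr]=(A_1-A_2)^2+2A_1A_2\bigl(1-\cos k(q_1-q_2)\bigr).
\]
Applying the mean value theorem, $A_1-A_2=ke^{c}(r_1-r_2)$ for some $c$ between $k(r_1-f(s))$ and $k(r_2-f(s))$, and using $1-\cos\theta\le \theta^2/2$, this rearranges into
\[
(1-A_1A_2)(q_1-q_2)^2+(1-e^{2c})(r_1-r_2)^2\le 0.
\]
Since $r_i\le r_0$ and $f(s)\ge 0$ give $A_i\le e^{kr_0}$ and $c\le kr_0$, both coefficients are strictly positive when $r_0<0$, which forces $(q_1,r_1)=(q_2,r_2)$. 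In the limiting case $r_0=0$ the same argument works off the excluded wave-crest set, where $A_1A_2=1$ and $e^{2c}=1$ can occur simultaneously.

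The main obstacle I anticipate is spotting the right algebraic combination of the $x$- and $z$-equations that collapses the two scalar identities into a single quadratic inequality with strictly positive coefficients; once that identity is in place, the remaining work is controlled by the mean value theorem and the elementary inequality $1-\cos\theta\le\theta^2/2$. A secondary delicate point is the simultaneous degeneracy of the Jacobian and of the coefficients $(1-A_1A_2)$ and $(1-e^{2c})$ in the limiting case $r_0=0$, which explains precisely why the statement of the lemma must exclude the equatorial wave-crest points.
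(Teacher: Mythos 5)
Your proposal is correct, and while the local-diffeomorphism half coincides with the paper's (expand the Jacobian along the middle row, obtain $1-e^{2k[r-f(s)]}$, invoke the Inverse Function Theorem away from $r=s=0$ when $r_0=0$), your injectivity argument takes a genuinely different route. The paper identifies the $(q,r)$-plane with $\C$, writes the planar map as $F(\xi)=\xi+g(\xi)$ with $g(\xi)=i\,k^{-1}e^{-kf(s)}e^{ik\overline{\xi}}$, and uses the mean value inequality $|g(\xi_1)-g(\xi_2)|\le \sup\|Dg\|\,|\xi_1-\xi_2|$ to conclude $|F(\xi_1)-F(\xi_2)|\ge (1-e^{2k(\tilde r-f(s))})|\xi_1-\xi_2|$ --- the standard ``identity plus contraction'' argument. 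You instead subtract the two $x$-equations and the two $z$-equations, square and add, and use $\cos\alpha\cos\beta+\sin\alpha\sin\beta=\cos(\alpha-\beta)$ to land on the identity $k^2[(q_1-q_2)^2+(r_1-r_2)^2]=(A_1-A_2)^2+2A_1A_2(1-\cos k(q_1-q_2))$, which you then close with the scalar mean value theorem and $1-\cos\theta\le\theta^2/2$. Both arguments ultimately rest on the same quantity $e^{k(r-f(s))}<1$, but yours is more hands-on and arguably more robust: it avoids the operator-norm computation (which the paper in fact records with a typo, writing $\|Dg\|=e^{2k(r-f(s))}$ where the matrix is $e^{k(r-f(s))}$ times an orthogonal reflection, so the norm is $e^{k(r-f(s))}$ --- harmless since both are $<1$), and if one uses the strict inequality $1-\cos\theta<\theta^2/2$ for $\theta\ne 0$ your identity even yields injectivity on the cusp line itself. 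Two small points to fix: your determinant has the wrong sign --- the Jacobian $\partial(x,y,z)/\partial(q,r,s)$ equals $1-e^{2k[r-f(s)]}$, which is non-negative on $\mathcal D$, not non-positive (immaterial for the Inverse Function Theorem, but worth getting right since the paper uses positivity to comment on incompressibility); and you should note explicitly that the mean value theorem gives $c$ in the \emph{open} interval between $k(r_1-f(s))$ and $k(r_2-f(s))$ when $r_1\ne r_2$, which is what guarantees $e^{2c}<1$ in the borderline configurations with $r_0=0$.
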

\begin{proof}
We remark first that $r-f(s)\leq r_0-f(s)\leq r_0\leq 0$, as we see from the definition of $r_0$ and \eqref{f_eq}.  The differential of 
\eqref{exact_sol_t_0} at a point $(q,r,s)$ is given by 
\begin{equation}
	\left(
	\begin{array}{ccc}
	 1-e^{k(r-f(s))}\cos(kq) &   f'(s)e^{k(r-f(s))}\sin(kq) &-e^{k(r-f(s))}\sin(kq) 
	 \\
	 0 & 1 & 0
	 \\
	  -e^{k(r-f(s))}\sin(kq)& f'(s)e^{k(r-f(s))}\cos(kq) &  1+e^{k(r-f(s))}\cos(kq)   
	\end{array}
	\right)
\end{equation}
with determinant $1-e^{2k(r-f(s))}$. As an aside, we note that the time independence of this expression implies that the fluid is incompressible and so \eqref{incompress_eq} holds, cf. \cite{Con2012}.  It follows that if $r_0<0$ the Jacobian of \eqref{exact_sol_t_0} is non-zero (strictly positive) everywhere, whereas in the case $r_0=0$ the Jacobian is zero precisely at the Equator ($s=0$), where the break-down in regularity corresponds to the appearance of cusps at the wave-crest as discussed above. Therefore, aside from the situation when $r_0=s=0$, the mapping \eqref{exact_sol_t_0} is differentiable, continuous with  non-zero derivative, and hence we can apply the Inverse Function Theorem to infer that  
\eqref{exact_sol_t_0} is a smooth local diffeomorphism onto its image.

Let us prove now that \eqref{exact_sol_t_0} is injective. Let 
$(q_i,r_i,s_i)\in\mathcal{D}$ for $i=1,2$,  and let $\big(x(q_i,r_i,s_i), y
(q_i,r_i,s_i), z(q_i,r_i,s_i)\big)$ be the corresponding fluid particles given by 
\eqref{exact_sol_t_0}. 
First of all,  if 
$$\big(x(q_1,r_1,s_1),y(q_1,r_1,s_1),z(q_1,r_1,s_1)\big)=\big(x(q_2,r_2,s_2),y
(q_2,r_2,s_2),z(q_2,r_2,s_2)\big)$$
then $s_1=s_2$. Thus, we can fix $s$ and then focus on checking injectivity 
with respect to  $x$ and $z$ in \eqref{exact_sol_t_0}. 
Letting $\xi=q+ir $, 
then the values of $(x,z)$ in \eqref{exact_sol_t_0} correspond to the map 
\[
	\xi\mapsto \xi+i{e^{-kf(s)}\over k}e^{ik\overline\xi}.
\]
To prove injectivity, we consider $F(\xi)=\xi+g(\xi)$, where $\xi=(q,r)$. Let 
$\xi_1\ne\xi_2$, then applying the Mean Value Theorem we derive 
\begin{equation}\label{ineq_inject}
	\begin{array}{ll}
	\left|F(\xi_1)-F(\xi_2)\right|&\displaystyle \ge \left|\xi_1-\xi_2\right|-\left|g
	(\xi_1)-g	(\xi_2)\right| \smallskip\\
	&\displaystyle\ge    \left|\xi_1-\xi_2\right|-\max\limits_{s\in[0,1]}\|Dg_{s\xi
	+(1-s)\xi_2}\|\left|\xi_1-\xi_2\right|.
	\end{array}
\end{equation}
Computing $Dg$ in terms of $(q,r)$, yields 
\begin{equation}
	Dg_{(q,r)}=e^{k(r-f(s))}\left(
	\begin{array}{cc}
	 -\cos(kq) &    -\sin(kq) \\
	  -\sin(kq)&   \cos(kq)   
	\end{array}
	\right)
\end{equation}
then  $\|Dg_{(q,r)}\|=e^{2k(r-f(s))}$. From \eqref{ineq_inject}, and considering that 
$\xi_i=(q_i,r_i)$ for $i=1,2$,  we obtain that 
\begin{equation*}
	\begin{array}{ll}
	\left|F(\xi_1)-F(\xi_2)\right|
	&\displaystyle\ge \displaystyle   \left|\xi_1-\xi_2\right|-e^{2k(\max
	\{r_1,r_2\}-f(s))}\left|
	\xi_1-\xi_2\right|\\
	&\displaystyle=  \big(1-e^{2k(\tilde{r}-f(s))}\big)\left|\xi_1-\xi_2\right|,
	\end{array}
\end{equation*}
where $\tilde{r}=\max\{r_1,r_2\}$.  Therefore, if $\tilde{r}-f(s)<0$, $F$ is 
injective, and we have proved that \eqref{exact_sol_t_0} is injective.
\end{proof}
The following result  will be used to prove that 
\eqref{exact_sol} is in fact a global diffeomorphism, cf. \cite{Wieslaw}, \cite{Rothe}.
\begin{theorem}\label{Invariance_domain} {\bf (Invariance of Domain 
theorem)}  
	If $U\subset \R^n$ is open and $F:U\to \R^n$ is a continuous one-to-one 
	mapping, then $F:U\to F(U)$ is a homeomorphism, and 
	$F(\partial\overline{U})=\partial F(\overline{U})$.
\end{theorem}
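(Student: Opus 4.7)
The plan is to establish the theorem via Brouwer degree theory, in the spirit of \cite{Deimling,Wieslaw}. The crux is the openness of $F(U)$, from which the homeomorphism claim follows easily, and the boundary identity then reduces to a bookkeeping argument using openness in both directions. Indeed, once $F(U)$ is shown to be open, the restriction of $F$ to any compact closed ball $\overline{B_r(x_0)}\subset U$ is a continuous bijection onto a compact subset of $F(U)$, hence a homeomorphism; continuity of $F^{-1}$ at $F(x_0)$ follows, yielding the homeomorphism conclusion. For the boundary identity (understood after the standard continuous extension of $F$ to $\overline{U}$), a boundary point of $\overline{U}$ cannot map into the interior of $F(\overline{U})$ without contradicting openness together with injectivity, and the reverse inclusion follows by applying the same reasoning to $F^{-1}$.

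To prove openness, I would localize. Fix $x_0\in U$, set $y_0=F(x_0)$, and choose $r>0$ with $\overline{B_r(x_0)}\subset U$. By compactness and injectivity, $y_0\notin F(\partial B_r(x_0))$, so $\delta:=\mathrm{dist}(y_0,F(\partial B_r(x_0)))>0$. The Brouwer degree $d(F,B_r(x_0),y)$ is then well-defined and, by the continuity of the degree with respect to $y$ on the complement of $F(\partial B_r(x_0))$, constant in $y$ on the ball $B_\delta(y_0)$. If the degree at $y_0$ is nonzero, its solvability property forces $B_\delta(y_0)\subset F(B_r(x_0))\subset F(U)$, and since $x_0\in U$ was arbitrary this shows $F(U)$ is open.

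The main obstacle is the non-vanishing of $d(F,B_r(x_0),y_0)$; this is the genuinely topological ingredient and cannot be sidestepped by analytic means. I would try to construct an admissible homotopy, within the class of maps sending $\partial B_r(x_0)$ away from $y_0$, connecting $F|_{\overline{B_r(x_0)}}$ to an affine injection whose degree at $y_0$ is evidently $\pm 1$; homotopy invariance of the degree would then transfer this value to $F$. The delicate point is keeping the homotopy within injective maps, which in general requires first approximating $F$ by a piecewise-linear injection on the compact ball and then interpolating linearly. An alternative and often cleaner route uses the Jordan--Brouwer separation theorem to argue directly that a continuous injection of $S^{n-1}$ into $\R^n\setminus\{y_0\}$ has linking index $\pm 1$ with respect to $y_0$. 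Either way, this step is the heart of the matter and the point at which essential topology enters.
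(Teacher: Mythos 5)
The paper does not prove this statement at all: Theorem \ref{Invariance_domain} is quoted as a classical result and referred to the literature (cf.\ \cite{Deimling}, \cite{Wieslaw}, \cite{Rothe}), so there is no internal proof to compare yours against. Judged on its own terms, your sketch follows the standard degree-theoretic route and correctly isolates the one genuinely topological step, namely that $d(F,B_r(x_0),y_0)\neq 0$ for a continuous injection $F$; the surrounding reductions (positivity of $\delta$, constancy of the degree on $B_\delta(y_0)$, solvability, and the passage from openness to the homeomorphism property via compact balls) are all sound.

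The gap is in your proposed resolution of that key step. Homotoping $F|_{\overline{B_r(x_0)}}$ to an affine injection ``within the class of injective maps'' is not a workable plan: a piecewise-linear approximation of a continuous injection need not be injective, and even granting an injective approximation, no argument is given that an admissible homotopy all the way down to an \emph{affine} map exists --- if continuous injections of the ball were always so deformable, the theorem would be nearly immediate, so this is circular in spirit. The standard repair (this is how \cite{Deimling} argues) is different: normalise $x_0=0$, $F(x_0)=0$, and use the homotopy $h(t,x)=F\left(\frac{x}{1+t}\right)-F\left(\frac{-tx}{1+t}\right)$ on $[0,1]\times\overline{B_r(0)}$, which is admissible on $\partial B_r(0)$ precisely because $F$ is injective, and whose endpoint $h(1,\cdot)$ is an \emph{odd} map; Borsuk's theorem then yields odd, hence nonzero, degree. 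Your alternative via Jordan--Brouwer separation is legitimate but simply relocates the same difficulty rather than resolving it. Finally, the boundary identity $F(\partial\overline{U})=\partial F(\overline{U})$ requires $F$ to extend continuously \emph{and injectively} to $\overline{U}$ (as the flow map \eqref{exact_sol_t_0} does on $\overline{\mathcal D}$); without injectivity on the closure the inclusion $F(\partial\overline{U})\subset\partial F(\overline{U})$ can fail, so the hypothesis under which you ``apply the same reasoning to $F^{-1}$'' should be made explicit.
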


We have already proved in Lemma \ref{inject} that the exact solution \eqref{exact_sol_t_0} 
gives us a local diffeomorphism that is globally injective on $\mathcal{D}$. The result below proves  
that \eqref{exact_sol} is a global diffeomorphism for all $t\ge 0$, this is, that \eqref{exact_sol} 
is dynamically possible.

\begin{proposition}\label{prop}
	For every fixed $t\ge 0$, if $r_0\le 0$ the map \eqref{exact_sol} is a global 
	diffeomorphism 
	from $\mathcal{V}=\left\{(q,r,s): q\in\R, r < r_0 \mbox{ and } s\in\R\right\}$ 
	into 
	the fluid domain beneath the free surface $z=\eta(x,y,t)$. Moreover, if 
	$r_0<0$ the free surface $z=\eta(x,y,t)$ has a smooth profile, and in the 
	limiting 
	case $r_0=0$ the free surface is 
	piecewise smooth with upward cusps at $s=0$.
\end{proposition}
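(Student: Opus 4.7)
My plan is to combine Lemma \ref{inject} with Theorem \ref{Invariance_domain} to promote the local-diffeomorphism and injective picture available on the fundamental domain $\mathcal{D}$ to a genuine global diffeomorphism on the full open set $\mathcal{V}$, and then to identify the image with the sub-surface fluid region. As the text indicates, it suffices to carry out the argument at $t=0$, since a general time is recovered by the change of variables $(q,r,s)\mapsto(q+ct,r,s)$ followed by a horizontal shift in $x$; throughout I work with \eqref{exact_sol_t_0}.

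The first step is to verify that on $\mathcal{V}$ the map is both a local diffeomorphism and globally injective. For the local part, $\mathcal{V}$ imposes the strict inequality $r<r_0$; since $f(s)=\tfrac{c\beta}{2g}s^2\ge 0$ and $r_0\le 0$, this forces $r-f(s)<0$ everywhere on $\mathcal{V}$, so the Jacobian $1-e^{2k(r-f(s))}$ computed in the proof of Lemma \ref{inject} is strictly positive. This remains true even at the equatorial section $s=0$ in the limiting case $r_0=0$, because on $\mathcal{V}$ we insist on $r<0$ strictly, so no cusp lies inside $\mathcal{V}$. For injectivity, I would observe that the contraction estimate inside the proof of Lemma \ref{inject}, $|F(\xi_1)-F(\xi_2)|\ge (1-e^{2k(\tilde r-f(s))})|\xi_1-\xi_2|$, never used $q\in[0,2\pi/k]$; once $s_1=s_2$ is forced by the $y$-component, it gives injectivity for any $\xi_1,\xi_2\in\R\times(-\infty,r_0)$. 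Hence $F$ is a smooth, locally invertible, globally injective map on the open set $\mathcal{V}\subset\R^3$.

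At this stage Theorem \ref{Invariance_domain} applies directly: the image $F(\mathcal{V})$ is open in $\R^3$ and $F:\mathcal{V}\to F(\mathcal{V})$ is a homeomorphism, and combined with the nonvanishing Jacobian this upgrades to a smooth diffeomorphism onto its image. What remains, and what I expect to be the main technical step, is to show that $F(\mathcal{V})$ coincides with the open region strictly beneath the free surface. Because $y=s$, one can fix $s$ and analyse the planar map $F_s:(q,r)\mapsto(x,z)$ separately for each latitude. The bounds $|x-q|\le 1/k$ and $|z-r|\le 1/k$ (from $e^{k(r-f(s))}\le 1$) show that the continuous extension of $F_s$ to the closed half-plane $\R\times(-\infty,r_0]$ is proper, so its image is closed in $\R^2$. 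Hence $F_s(\R\times(-\infty,r_0))$ is open, disjoint from the trochoid cross-section (by injectivity), with topological boundary contained in that trochoid; connectedness together with the fact that $z\to-\infty$ as $r\to-\infty$ then forces this image to be precisely the subgraph of the trochoid. Assembling these slices over $s\in\R$ identifies $F(\mathcal{V})$ with the fluid domain $\{z<\eta(x,y,t)\}$.

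For the final smoothness claim, differentiating the first component of \eqref{surface_profile} gives $\partial_q x_{\mathrm{surf}}=1-e^{r_0-f(s)}\cos(kq)$. When $r_0<0$ one has $e^{r_0-f(s)}<1$ for every $s\in\R$, so this derivative is strictly positive, $x$ is a smooth diffeomorphism in $q$ at each latitude, and the free surface is a smooth graph. In the limiting case $r_0=0$, this derivative vanishes exactly when $s=0$ and $\cos(kq)=1$, producing the upward cusps at the equatorial wave-crests while the profile stays smooth off the Equator.
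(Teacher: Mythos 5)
Your proof is correct and follows the same overall strategy as the paper---Lemma \ref{inject} plus the Invariance of Domain Theorem---but you execute two steps more carefully than the published argument. First, you apply Theorem \ref{Invariance_domain} on the open set $\mathcal{V}$ after checking that both the Jacobian positivity and the contraction estimate $|F(\xi_1)-F(\xi_2)|\ge\bigl(1-e^{2k(\tilde r-f(s))}\bigr)|\xi_1-\xi_2|$ extend to all $q\in\R$ and all $r<r_0$; the paper instead invokes the theorem on the fundamental domain $\mathcal{D}$, which is not open (it includes $q\in[0,2\pi/k]$ and $r=r_0$), so your version is the one to which the theorem as stated literally applies. Second, and more substantively, you actually prove that the image equals the region beneath the free surface: your slice-by-slice argument (properness of the extension to $r\le r_0$ from the bounds $|x-q|,|z-r|\le 1/k$, hence closed image, hence $\partial F_s(H)$ contained in the trochoid, hence $F_s(H)$ is a component of its complement, selected by $z\to-\infty$) is a genuine addition. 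The paper only records where the boundary pieces of $\overline{\mathcal{D}}$ are sent and does not argue surjectivity onto the subsurface domain, so your connectedness step fills a real gap. Your direct verification of the smoothness/cusp dichotomy for the surface profile via $\partial_q x_{\mathrm{surf}}=1-e^{r_0-f(s)}\cos(kq)$ likewise makes explicit what the paper asserts by reference to the trochoid/cycloid discussion. One cosmetic remark: the paper states $\|Dg_{(q,r)}\|=e^{2k(r-f(s))}$, whereas the matrix is $e^{k(r-f(s))}$ times an orthogonal matrix, so the operator norm is $e^{k(r-f(s))}$; either bound is strictly less than $1$ under the hypothesis $\tilde r-f(s)<0$, so the conclusion you draw is unaffected.
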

\begin{proof}	
From Lemma \ref{inject}, we know that the map \eqref{exact_sol_t_0} 
is an injective local diffeomorphism from $\mathcal{D}=\left\{(q,r,s): q\in\left[0,
{2\pi\over k}\right], r \le r_0 \mbox{ and } s\in\R\right\}$ into its image.
To prove that the local diffeomorphism is in fact a global diffeomorphism we 
just have  to prove that it is a homeomorphism. Indeed, since the 
hypotheses in the Invariance of Domain Theorem \ref{Invariance_domain} 
are satisfied, then the map \eqref{exact_sol_t_0} is a homeomorphism. 
Although it is guaranteed by the Invariance Domain Theorem \ref{Invariance_domain}, 
we can see  directly that   
the map \eqref{exact_sol_t_0} sends $\partial \overline{\mathcal{D}}$ 
into the boundaries of the image of $\overline{\mathcal{D}}$.  
The vertical semiplanes $\left\{(0,r,s): r \le r_0\mbox{ and } s\in\R\right\}$ and 
$\left\{\!(2\pi /k,r,s): r \le 
r_0\mbox{ and } s\in\R\right\}$ are transformed by \eqref{exact_sol_t_0} in the  
 vertical  surfaces  \linebreak
$\left\{\!(0,y,z)\!:\!  z \le r_0(s)+{e^{k(r_0(s)-f(s))}\over k},\, y\in\R\right\}$ and 
$\left\{(2\pi /k,y,z)\!:\!z \le r_0(s)+{e^{k(r_0(s)-f(s))}\over k},\, y\in\R\right\}
$ respectively, and 
the horizontal semiplane  $\left\{(q,r_0,s): 0\le q\le 2\pi/k \mbox{ and } 
s\in\R\right\}$ 
becomes part of the reverse trochoid if $r_0(s)-f(s)<0$, which is smooth, and it 
becomes part of the reverse cycloid if $r_0(s)=0$ and $s=0$, which is piecewise 
smooth with upward cusps.

We have proved 
that \eqref{exact_sol_t_0} is a global diffeomorphism map from $\mathcal{D}$ 
into its image if $r_0<0$, with singularities occurring when $r_0=0$ and 
$s=0$. 
Since the full system \eqref{exact_sol} can be recovered from  \eqref{exact_sol_t_0} 
by making the change of variables $(q,r,s)\mapsto (q+ct,r,s)$, and finally shifting 
the horizontal variable $x$ by $ct$, it follows that \eqref{exact_sol} is a global 
diffeomorphism from  $\mathcal{V}=\left\{(q,r,s): q\in\R, r < r_0 \mbox{ and } 
s\in\R\right\}$ into the fluid domain below the free surface. 
\end{proof}



\end{document}